\newtheorem{theorem}{Theorem}[section]
\newtheorem{lemma}[theorem]{Lemma}
\newtheorem{corollary}[theorem]{Corollary}
\newtheorem{proposition}[theorem]{Proposition}
\theoremstyle{definition}
\newtheorem{definition}[theorem]{Definition}
\newtheorem{conjecture}[theorem]{Conjecture}
\newtheorem{problem}[theorem]{Problem}
\theoremstyle{remark}
\newtheorem{remark}[theorem]{Remark}
\numberwithin{equation}{section}
\begin{document}

\title{Segment Number of Knots}

\author{Makoto Ozawa}
\address{Department of Natural Sciences, Faculty of Arts and Sciences, Komazawa University, 1-23-1 Komazawa, Setagaya-ku, Tokyo, 154-8525, Japan}
\email{w3c@komazawa-u.ac.jp}


\subjclass[2000]{Primary 57M25; Secondary 57Q35}

\keywords{knot, diagram, bridge number, segment number, directed graph}

\begin{abstract}
We introduce a new numerical knot invariant, termed the \textit{segment number}, which is derived from partitioned knot diagrams subject to specific over/under-crossing constraints. We prove that a knot is non-trivial if and only if its segment number is at least 3. Furthermore, we investigate the structural properties of the directed graph associated with a minimal segment number presentation. Specifically, we show that for any minimal presentation, the underlying graph is connected and cannot be a path. Finally, we discuss the relationship between the segment number and the bridge number, providing bounds and conjectures for future study.
We also conjecture that the bridge number $b(K)$ provides a lower bound for the segment number.
\end{abstract}

\maketitle

\section{Introduction}

Knot diagrams are traditionally studied through various numerical invariants, such as the crossing number, bridge number, and unknotting number. These invariants quantify the complexity of a knot in terms of its projections onto a plane. In this paper, we propose a new invariant, the \emph{segment number} $s(K)$, defined by partitioning a knot diagram into a set of arcs (segments) that satisfy specific over/under-crossing constraints.\footnote{This invariant was referred to as the ``edge number'' in an earlier preprint (arXiv:0705.4348). We have adopted the term ``segment number'' to avoid confusion with the stick number.}

The motivation for this invariant arises from a structural analysis of why knots cannot be trivially unraveled. Consider a diagram partitioned into several segments. If the crossing information between any pair of segments is consistent (e.g., segment $A$ is always positioned over segment $B$), a precedence relationship is established. We hypothesize that a ``deadlock'' within these relationships is what prevents a knot from being trivial.

For instance, Figure \ref{trefoil} illustrates a standard diagram of the trefoil knot partitioned into three segments: $e_1, e_2$, and $e_3$. In this presentation, $e_1$ lies entirely over $e_2$, $e_2$ over $e_3$, and $e_3$ over $e_1$. This cyclic, ``rock-paper-scissors''-like relation creates a topological deadlock, the complexity of which is captured by the segment number.

\begin{figure}[htbp]
	\centering
    \begin{tabular}{cc}
         \includegraphics[trim=0mm 0mm 0mm 0mm, width=.3\linewidth]{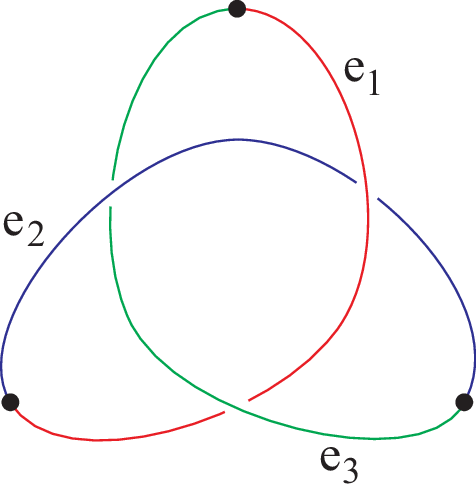} & 
         \includegraphics[trim=0mm 0mm 0mm 0mm, width=.33\linewidth]{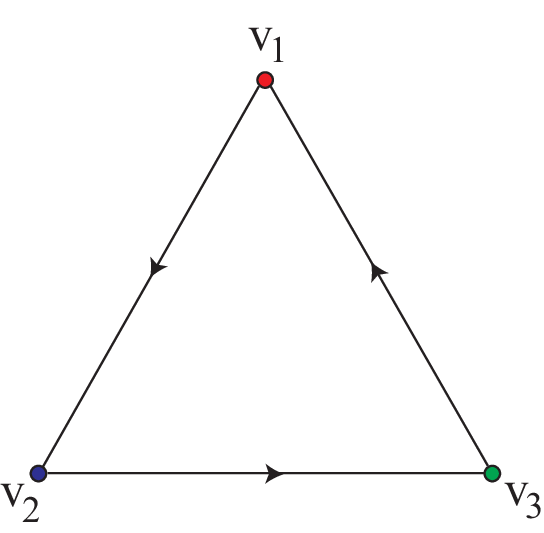}\\
         $D$ & $G(D)$
    \end{tabular}
	\caption{A 3-segment presentation $D$ of the trefoil knot and the digraph $G(D)$ corresponding to $D$}
	\label{trefoil}
\end{figure}

The remainder of this paper is organized as follows. We first provide a formal definition of the segment number and establish its fundamental properties. We then prove that $s(K) \ge 3$ for any non-trivial knot (Proposition \ref{non-trivial}) and analyze the topology of the associated directed graphs for minimal presentations (Theorem \ref{main}). Finally, we examine knots with $s(K)=3$ for specific crossing numbers and discuss the relationship between the segment number and the bridge number.

\section{Definition of segment Number}

Throughout this paper, we work within the piecewise linear category and study knots in the three-dimensional Euclidean space $\mathbb{R}^{3}$ or the 3-sphere $S^3$. For standard definitions and results in knot theory, we refer the reader to \cite{R, BZ, Kaw}.

Let $K$ be a knot and $D$ be a diagram of $K$ on the 2-sphere $S^{2}$. We partition $D$ into $n$ arcs $e_1, \ldots, e_n$ ($n \ge 1$) by choosing $n$ points on the diagram that are distinct from the crossings. We call such a diagram $D$ an \textit{$n$-partitioned diagram}.

\begin{definition}
An $n$-partitioned diagram $D$ is called an \textit{$n$-segment presentation} of $K$ if it satisfies the following conditions:
\begin{enumerate}
    \item Each segment $e_i$ contains no self-crossings.
    \item For any pair of distinct segments $e_i$ and $e_j$, exactly one of the following holds:
    \begin{enumerate}
        \item $e_i$ crosses \textit{over} $e_j$ at every crossing point between them.
        \item $e_i$ crosses \textit{under} $e_j$ at every crossing point between them.
        \item There are no crossings between $e_i$ and $e_j$.
    \end{enumerate}
\end{enumerate}
The \textit{segment number} $s(K)$ of a knot $K$ is the minimum integer $n$ such that $K$ admits an $n$-segment presentation.
\end{definition}

We can associate a directed graph (digraph) $G(D)$ with an $n$-segment presentation $D$ as follows:
\begin{enumerate}
    \item Assign a vertex $v_i$ in $G(D)$ to each segment $e_i$ of $D$.
    \item For any two segments $e_i$ and $e_j$ that intersect, we define a directed edge:
    \begin{itemize}
        \item from $v_i$ to $v_j$ if $e_i$ passes over $e_j$ at their crossings;
        \item from $v_j$ to $v_i$ if $e_j$ passes over $e_i$ at their crossings.
    \end{itemize}
\end{enumerate}

As shown in Figure \ref{trefoil}, the digraph $G(D)$ obtained from the 3-segment presentation of the trefoil knot forms an oriented 3-cycle.

\begin{remark}
The strict consistency required by condition (2) is essential. If mixed over/under crossings were permitted between a single pair of segments, any knot would admit a trivial ``2-segment'' presentation (see Figure \ref{trefoil2}).\footnote{This observation has been cited in various studies following the preprint's submission (e.g., \cite{AST, IT, BM, EHLN}) and has been further developed under terms such as ``2-colored diagrams'' and ``meander diagrams.'' While its origins can be traced back to Hotz (\cite{H}, 1960), it is also noted that the concept was known to Gauss.}
\end{remark}

\begin{figure}[htbp]
	\centering
	\includegraphics[trim=0mm 0mm 0mm 0mm, width=.6\linewidth]{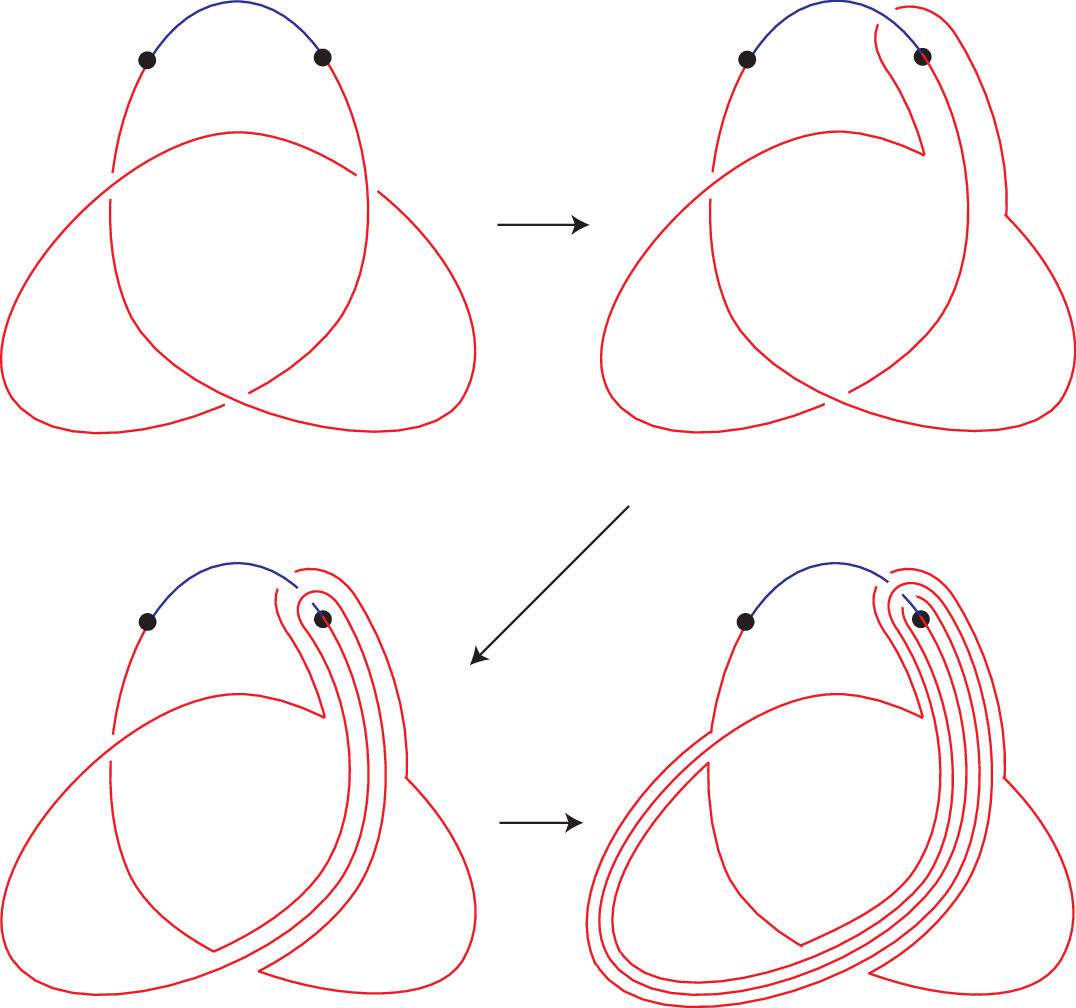}
	\caption{A 2-partitioned diagram that is NOT a 2-segment presentation because the crossing relations are mixed.}
	\label{trefoil2}
\end{figure}

\section{Main Results}

We begin by establishing the lower bound for the segment number of non-trivial knots.

\begin{proposition}\label{non-trivial}
A knot $K$ is non-trivial if and only if $s(K) \ge 3$.
\end{proposition}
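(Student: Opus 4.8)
The plan is to establish the equivalent statement that $e(K)\le 2$ if and only if $K$ is trivial; the proposition then follows by contraposing one implication. Two preliminary remarks. First, $e(K)$ is a well-defined positive integer: every knot admits an $n$-cycle presentation for some $n$ (subdivide any diagram so finely that each edge meets at most one crossing, in only one of the two strands there, whence conditions (1) and (2) hold automatically), and $n\ge 1$ always since the single component must carry a vertex. So only the values $n=1$ and $n=2$ require attention.

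The implication ``$K$ trivial $\Rightarrow e(K)\le 2$'' is immediate: take the standard crossingless round diagram of the unknot and mark one vertex on it; the resulting $1$-partitioned diagram is a $1$-cycle presentation, because its one edge has no self-crossing and condition (2) is vacuous, so $e(K)=1$. For the reverse implication, suppose $D$ is an $n$-cycle presentation of $K$ with $n\le 2$. If $n=1$, the lone edge has no self-crossing, so $D$ has no crossing whatsoever; a crossingless diagram is a simple closed plane curve, which bounds a disk, so $K$ is trivial. If $n=2$ with edges $e_1,e_2$, then condition (2) leaves two possibilities: either $e_1$ and $e_2$ have no common crossing, in which case (neither edge self-crossing) $D$ is again crossingless and $K$ is trivial; or, after relabeling, $e_1$ lies over $e_2$ at every crossing.

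In that last case I would argue as follows. Since $e_1$ has no self-crossing it projects to an embedded plane arc, and likewise $e_2$; hence one can isotope $K$ in $\Bbb{R}^{3}$ to a circle whose two vertices lie on a plane $P$, with $e_1$ the graph of a nonnegative height function over its projection (so $e_1$ lies in the closed upper half-space, its interior strictly above $P$) and $e_2$ similarly below $P$. At each crossing the $e_1$-strand is then higher than the $e_2$-strand, so this new circle has the same underlying projection and the same over/under data as $D$, and therefore still represents $K$. Now $e_1$, being the graph of a height function over an embedded plane arc, is an \emph{unknotted} arc in the upper half-space: it cobounds there an embedded disk $D_1$ with an arc $s$ of $P$ joining the two vertices, and $D_1$ meets $P$ exactly in $s$ (thicken the projection of $e_1$ slightly to one side to get $s$, and sweep $e_1$ down onto $s$). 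Symmetrically $e_2$ cobounds such a disk $D_2$ with an arc $s'$ of $P$; isotoping $D_2$ inside the lower half-space one may take $s'=s$ (dragging it across the plane region between $s'$ and $s$, whose interior misses $e_2$). Then $D_1\cup_s D_2$ is an embedded disk with boundary $e_1\cup e_2=K$, so $K$ is trivial. Equivalently, this exhibits $K$ in $1$-bridge position, and a $1$-bridge knot is classically known to be unknotted. The two implications together give the proposition.

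The step I expect to be the crux is this final one: checking that the pairwise over/under consistency genuinely permits $e_1$ to be raised entirely above $P$ and $e_2$ lowered entirely below it with no change of knot type, and then that each of the two arcs, having an embedded projection, bounds the desired half-space disk meeting $P$ only in a spanning arc, so that the two disks fit together along a common arc into a single spanning disk for $K$. The cases $n=1$, the no-common-crossing case, and the trivial-knot direction are all routine.
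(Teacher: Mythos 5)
Your proof is correct and follows essentially the same route as the paper: $n=1$ forces a crossingless diagram, and $n=2$ with $e_1$ everywhere over $e_2$ is exactly a $1$-bridge presentation, hence trivial, while the unknot's round diagram gives the converse. The only difference is that you spell out the classical fact that a $1$-bridge knot bounds a disk (gluing the two half-space disks along a common spanning arc of the plane), which the paper simply invokes.
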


\begin{proof}
($\Rightarrow$) Suppose $s(K)=1$, and let $D$ be a 1-segment presentation. By condition (1) of the definition, $D$ has no self-crossings. Thus, $D$ is a simple closed curve in the plane with no crossings, which implies $K$ is the trivial knot. 

Next, suppose $s(K)=2$ and let $D$ be a 2-segment presentation with segments $e_1$ and $e_2$. Without loss of generality, assume $e_1$ passes over $e_2$ at every crossing. This configuration corresponds to a 1-bridge presentation where $e_1$ acts as the over-bridge and $e_2$ as the under-bridge. Since a knot with bridge number 1 is necessarily trivial, $s(K)=2$ also implies $K$ is the trivial knot.

($\Leftarrow$) If $K$ is trivial, it admits a diagram with no crossings, which can be viewed as a 1-partitioned diagram satisfying the required conditions; hence $s(K)=1$. By contraposition, if $K$ is non-trivial, then $s(K) \ge 3$.
\end{proof}

When the segment number is minimal ($s(K)=3$), the structure of the associated digraph is remarkably rigid.

\begin{proposition}\label{3-segment}
Let $K$ be a knot with $s(K)=3$, and let $D$ be a minimal 3-segment presentation. Then $G(D)$ is an oriented 3-cycle.
\end{proposition}

\begin{proof}
Let $D$ be a 3-segment presentation with segments $e_1, e_2, e_3$. Suppose $G(D)$ does not form an oriented 3-cycle. Since $G(D)$ has 3 vertices, it must be acyclic (i.e., a transitive tournament or a subgraph thereof). Without loss of generality, assume the vertex ordering is $v_1 \to v_2 \to v_3$. This implies:
\begin{itemize}
    \item $e_1$ is over $e_2$,
    \item $e_2$ is over $e_3$,
    \item $e_1$ is over $e_3$ (if they intersect).
\end{itemize}
In this case, $D$ is a \textit{descending diagram} with respect to the ordering $(e_1, e_2, e_3)$. It is a well-known result in knot theory that any knot admitting a descending diagram is trivial. This contradicts the assumption that $s(K)=3$, as the unknot has $s(\text{Unknot})=1$. Therefore, $G(D)$ must contain a cycle, which for $n=3$ must be an oriented 3-cycle.
\end{proof}

For the general case, we provide the following structural theorem regarding the digraph of a minimal presentation.

\begin{theorem}\label{main}
Let $D$ be a minimal $n$-segment presentation of a non-trivial knot $K$. Then:
\begin{enumerate}
    \item $G(D)$ is connected.
    \item $G(D)$ is not a path.
\end{enumerate}
\end{theorem}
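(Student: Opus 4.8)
The plan is to use a single move, \emph{edge amalgamation}. If $e_i$ and $e_j$ meet at a vertex $v$ of $D$, delete $v$; then $e_i$ and $e_j$ merge into one arc $e_i\cup e_j$, and the resulting diagram $D'$ has $n-1$ edges and still represents $K$. First I would observe that $D'$ is again a cycle presentation exactly when (a) $e_i$ and $e_j$ have no crossing with each other and (b) every edge that crosses both $e_i$ and $e_j$ lies over both or under both: (a) keeps $e_i\cup e_j$ free of self-crossings, and (b) keeps it consistently over, or consistently under, each remaining edge. In terms of $G(D)$ the move is available precisely when $v_iv_j$ is not an arc and no vertex $v_k$ has an arc into one of $v_i,v_j$ and an arc out of the other. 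Consequently a minimal presentation admits no such amalgamation, and this is the only property of minimality I will use.

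For (1): suppose $G(D)$ were disconnected, with the vertices split into non-empty sets $A$ and $B$ joined by no arc. If two edges that are consecutive along $K$ lie in different sets, they have no crossing with one another, and no third edge can cross both (such an edge would send arcs to both $A$ and $B$); so (a) and (b) hold and amalgamating them contradicts minimality. For a knot the edges sit in one cyclic order along $K$, so such a consecutive pair must exist, and we are done. For links one first uses the same move to see that all edges of any one component lie in a single set, so that a disconnected $G(D)$ displays $K$ as a split link; thus (1) holds for knots and non-split links. (The $2$-component unlink has a minimal presentation whose $G(D)$ is two isolated vertices, so some such restriction is genuinely needed.)

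For (2): a path is connected, so by (1) it remains only to exclude $G(D)=P_n$, the path $v_1-v_2-\cdots-v_n$, in which $e_\ell$ crosses precisely $e_{\ell-1}$ and $e_{\ell+1}$. For $n=2$ there is nothing to prove, and for $n=3$ this is Proposition \ref{3-cycle}; so assume $n\ge4$. The idea is that minimality rigidifies the presentation completely. At the leaf $e_1$ (which crosses only $e_2$), amalgamating $e_1$ with an edge $g$ consecutive to it along $K$ is blocked only if $g=e_2$, or $g=e_3$ with $e_2$ over one of $e_1,e_3$ and under the other; since $e_1$ has two distinct such neighbours, they must be $e_2$ and $e_3$, forcing $G(D)$ to read $v_1\to v_2\to v_3$ after a choice of orientation and relabelling. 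Carrying this bookkeeping inductively to $e_2$, then $e_3$, and onward, I expect to be forced into: $G(D)$ is the directed path $v_1\to v_2\to\cdots\to v_n$, and the cyclic order of edges along $K$ is $\dots,e_6,e_4,e_2,e_1,e_3,e_5,e_7,\dots$ (the even-indexed edges in decreasing order, then the odd-indexed ones in increasing order). Placing $e_i$ at height $n+1-i$ and joining consecutive edges by short vertical arcs at the vertices then realizes $K$ by a diagram all of whose crossings come from these heights; along $K$ the height rises monotonically to its maximum and then falls monotonically back, so $K$ is presented with one bridge and is therefore trivial, contradicting $e(K)=n\ge4$ (the trivial knot has edge number $1$). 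The hard part will be exactly this induction: showing that at each stage the \emph{only} configuration blocking every amalgamation is the ``pass through to the next even- or odd-indexed edge'' pattern, which forces one to track the partial cyclic order and the partial orientation together and rule out every competing possibility. The amalgamation move, part (1), and the closing $1$-bridge argument are routine by comparison.
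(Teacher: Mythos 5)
Your part (1) is correct and is essentially the paper's argument: the paper packages your ``amalgamation'' move as Lemma \ref{neighbourhood}, deduces that vertices of $G(D)$ corresponding to edges consecutive along $K$ are at graph distance at most $2$ (Remark \ref{distance}), and concludes connectivity exactly as you do. Your observation that the statement needs a non-splitness hypothesis for links (the $2$-component unlink) is a genuine point the paper glosses over, since its Lemma is stated only for knots. One caution: your amalgamation criterion requires $e_i$ and $e_j$ to have no mutual crossing, whereas the paper's Lemma \ref{neighbourhood}(1) also covers the case where they \emph{do} cross: if $e_i\cup e_{i+1}$ is over (or under) every other edge, an isotopy removes the mutual crossings and the pair still amalgamates. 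You discard this move, and it is precisely the constraint you will need when consecutive-along-$K$ edges are adjacent in $G(D)$.

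Part (2) is where the proposal has a genuine gap: the entire content of the statement --- ruling out $G(D)=P_n$ for $n\ge 4$ --- is deferred to an induction that you only sketch and yourself label ``the hard part.'' You describe the configuration you expect to be forced into (the directed path with cyclic order $\ldots,e_4,e_2,e_1,e_3,e_5,\ldots$) and a clean way to finish from there (one maximum, one minimum, hence $1$-bridge, hence trivial), but the forcing argument is not carried out, and without the stronger amalgamation move above you cannot even extract a constraint from the adjacent consecutive pair $e_1,e_2$ in your target configuration (in the directed path, $v_1$ and $v_2$ are both sources in $G(D)-v_1v_2$, which is exactly what Lemma \ref{neighbourhood}(1) forbids --- that is the quickest contradiction once the structure is pinned down). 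For comparison, the paper disposes of (2) in two lines: a path violates the distance-$\le 2$ bound unless it has length two, and length two contradicts Proposition \ref{3-cycle}. Your instinct that more care is needed here is not unreasonable --- the distance bound alone does not exclude longer paths, since the vertices of $P_n$ can be cyclically ordered as $v_1,v_3,v_5,\ldots,v_6,v_4,v_2$ with all consecutive distances at most $2$, so the oriented-path and source/sink conditions of Lemma \ref{neighbourhood} must actually be invoked --- but as submitted your proof of (2) is incomplete.
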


To prove Theorem \ref{main}, we employ the following lemma regarding the local structure of $G(D)$.

\begin{lemma}\label{neighbourhood}
Let $D$ be a minimal $n$-segment presentation of a knot $K$. Then:
\begin{enumerate}
    \item For any two consecutive segments $e_i$ and $e_{i+1}$, the vertices $v_i$ and $v_{i+1}$ cannot both be sources or both be sinks in the subgraph of $G(D)$ formed by their relations with all other segments $e_j$ ($j \ne i, i+1$).
    \item For any two consecutive but non-crossing segments $e_i$ and $e_{i+1}$, there must exist a segment $e_k$ that crosses both, such that the crossing orientations are consistent (forming a directed path $v_i \to v_k \to v_{i+1}$ or $v_{i+1} \to v_k \to v_i$).
\end{enumerate}
\end{lemma}

\begin{proof}
(1) Suppose consecutive segments $e_i$ and $e_{i+1}$ are such that $v_i$ and $v_{i+1}$ are both sources (or both sinks) relative to all segments they intersect. This implies their union $E = e_i \cup e_{i+1}$ is entirely over (or entirely under) all other segments. Since $e_i$ and $e_{i+1}$ share only an endpoint and do not cross each other, their union $E$ can be isotoped into a single segment $e'$ without self-crossings. This yields an $(n-1)$-segment presentation, contradicting the minimality of $s(K)$. (See Figure \ref{reduce1}).

\begin{figure}[htbp]
	\centering
	\includegraphics[trim=0mm 0mm 0mm 0mm, width=.7\linewidth]{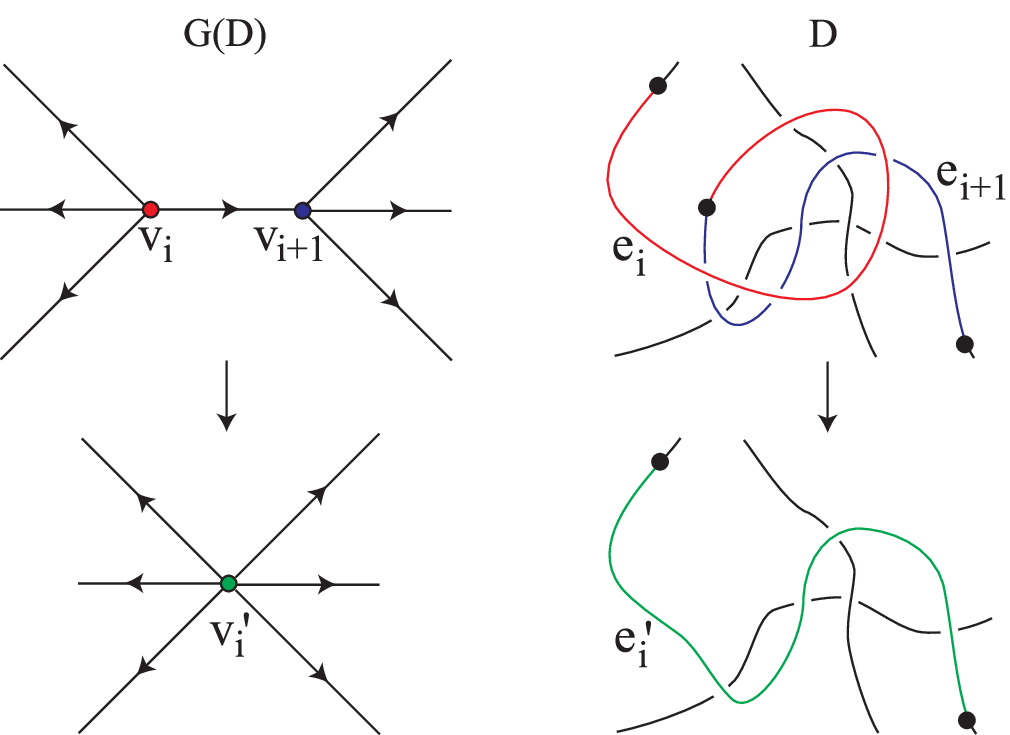}
	\caption{Reducing the segment number by merging $e_i$ and $e_{i+1}$.}
	\label{reduce1}
\end{figure}

(2) Similarly, if the crossing relationships with all neighboring segments do not force $e_i$ and $e_{i+1}$ to remain distinct, they can be merged into a single segment $e'$ without violating the definition of a segment presentation. This again contradicts the minimality of $s(K)$. (See Figure \ref{reduce2}).

\begin{figure}[htbp]
	\centering
	\includegraphics[trim=0mm 0mm 0mm 0mm, width=.7\linewidth]{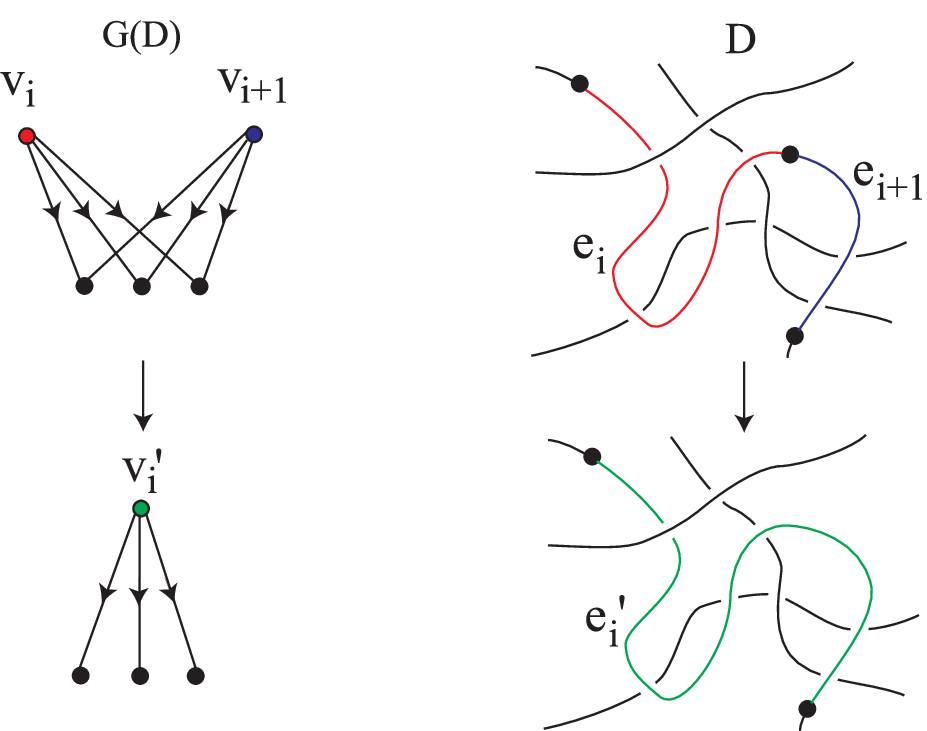}
	\caption{Merging segments when orientation constraints are not violated.}
	\label{reduce2}
\end{figure}
\end{proof}

Lemma \ref{neighbourhood} (2) implies the following corollary.

\begin{corollary}\label{distance}
In a minimal $n$-segment presentation $D$, the distance in $G(D)$ between vertices $v_i$ and $v_{i+1}$ corresponding to consecutive segments in the diagram is at most 2.
\end{corollary}

\begin{proof}[Proof of Theorem \ref{main}]
(1) Suppose $G(D)$ is disconnected. The set of segments then partitions into two disjoint sets $E_A$ and $E_B$ with no crossings between them. Since the knot $K$ is a connected curve, there must exist consecutive segments $e_i \in E_A$ and $e_{i+1} \in E_B$. By Corollary \ref{distance}, the distance between $v_i$ and $v_{i+1}$ is at most 2, which requires a path of length at most 2 in $G(D)$ connecting them. This contradicts the assumption that $E_A$ and $E_B$ are in different components.

(2) Suppose $G(D)$ is a path. By Proposition \ref{non-trivial}, $G(D)$ must have at least three vertices. If the path has exactly three vertices, Proposition \ref{3-segment} is violated (as it must be a cycle). If it has more than three vertices, the distance constraints between various consecutive segments $v_i, v_{i+1}$ imposed by Corollary \ref{distance} eventually force a cycle or a contradiction in the path's linear structure.
\end{proof}

We also verify that the segment number is independent of the crossing number modulo 6 for small segment numbers.

\begin{proposition}\label{exist}
For any $n \ge 3$ such that $n \not\equiv 5 \pmod{6}$, there exists a knot $K$ with crossing number $c(K)=n$ and $s(K)=3$.
\end{proposition}

\begin{proof}
The required knots can be constructed as alternating diagrams as shown in Figure \ref{6m}. These diagrams follow the pattern $n \equiv 0, 1, 2, 3, 4 \pmod{6}$. The minimality of the crossing number for these alternating diagrams is guaranteed by results \cite{Kau, Mur, Thi}.
\begin{figure}[htbp]
	\centering
	\begin{tabular}{ccc}
	\includegraphics[trim=0mm 0mm 0mm 0mm, width=.17\linewidth]{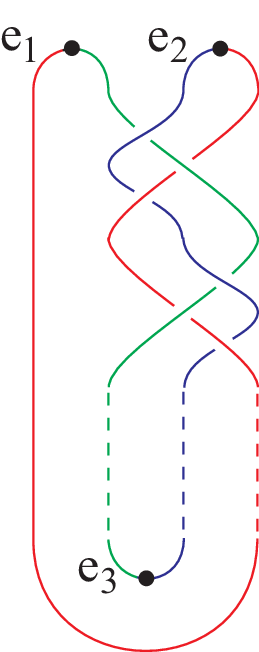}&
	\includegraphics[trim=0mm 0mm 0mm 0mm, width=.19\linewidth]{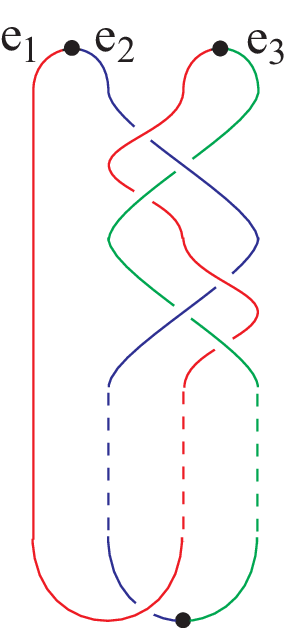}&
	\includegraphics[trim=0mm 0mm 0mm 0mm, width=.24\linewidth]{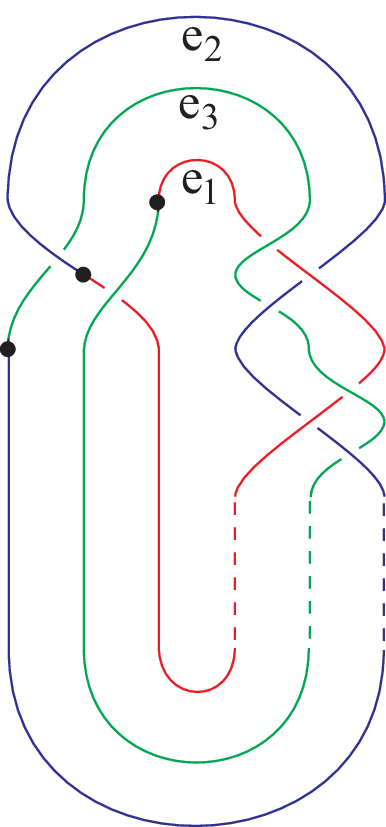}\\
	$n\equiv 0\mod 6$ & $n\equiv 1\mod 6$ & $n\equiv 2\mod 6$ \\
	\includegraphics[trim=0mm 0mm 0mm 0mm, width=.20\linewidth]{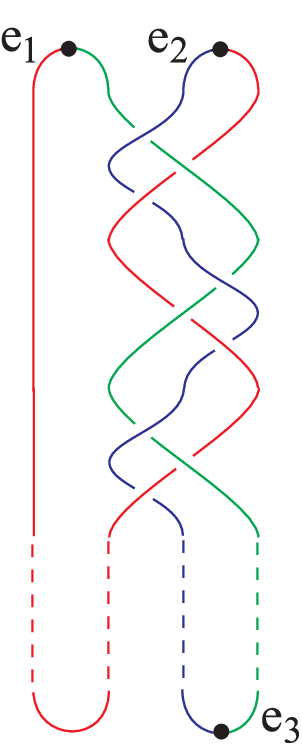}&
	\includegraphics[trim=0mm 0mm 0mm 0mm, width=.19\linewidth]{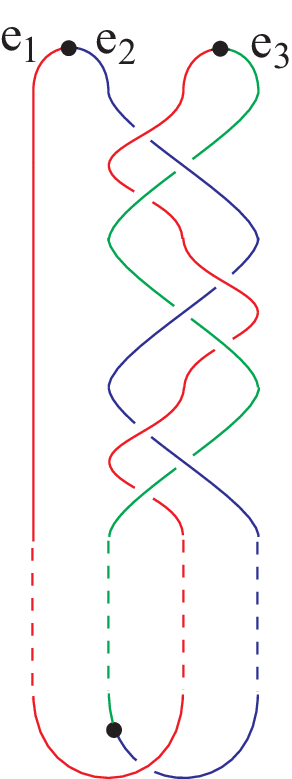}&
	\\
	$n\equiv 3\mod 6$ & $n\equiv 4\mod 6$ & \\
	\end{tabular}
	\caption{Examples of knots with $s(K)=3$ and varying crossing numbers.}
	\label{6m}
\end{figure}
\end{proof}

\section{Relationship between Segment Number and Bridge Number}

In this final section, we investigate the relationship between the segment number $s(K)$ and the bridge number $b(K)$.

\subsection{Upper Bound: $s(K) \le 2b(K)$}

\begin{proposition}
    For any knot $K$, the segment number satisfies $s(K) \le 2b(K)$.
\end{proposition}

\begin{proof}
Consider a bridge presentation of $K$ with $b(K)$ bridges, which consists of $b(K)$ over-bridges and $b(K)$ under-bridges. By treating each over-bridge and each under-bridge as an individual segment, we naturally obtain a $2b(K)$-segment presentation. Note that the resulting configuration forms a bipartite graph where the segments are joined at the bridge points. Thus, $s(K) \le 2b(K)$ follows.
\end{proof}

\subsection{Lower Bound Conjecture: $b(K) \le s(K)$}

Conversely, it is expected that the bridge number provides a lower bound for the segment number. We propose the following conjecture:

\begin{conjecture}\label{bridge}
For any knot $K$, $b(K) \le s(K)$.
\end{conjecture}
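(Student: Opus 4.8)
The plan is to translate the conjecture into Morse-theoretic language. It is classical that if $K$ admits an embedding in $\Bbb{R}^{3}$ with at most $m$ local maxima with respect to the standard height function then $b(K)\le m$, and that $b(K)$ is realized this way. So it suffices to show that from \emph{any} $n$-cycle presentation $D$ of $K$ one can build an embedding of $K$ with at most $n$ local maxima; taking the minimum over all cycle presentations then yields $b(K)\le e(K)$. (In particular minimality of $D$ is not needed for this approach.) Throughout, $G=G(D)$ is the associated digraph on vertices $u_1,\dots,u_n$, which by condition (2) is an oriented graph, having no pair of opposite arcs.

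First I would dispose of the case where $G$ is acyclic. Topologically sort $G$ and assign pairwise distinct heights $h_1,\dots,h_n$ to the arcs $e_1,\dots,e_n$ so that $u_i\to u_j$ implies $h_i>h_j$. Realize each $e_i$ as a simple arc at the constant height $h_i$ (possible since $e_i$ has no self-crossing), and join consecutive arcs by short vertical segments inside a small disk around each vertex $v_i$, which contains no crossing. The resulting embedding has diagram $D$ with the original over/under data, hence represents $K$, and its local maxima correspond exactly to the cyclic local maxima of the height sequence read around each component; there are at most $\lfloor n/2\rfloor$ of these. Thus one gets the stronger bound $b(K)\le\lfloor n/2\rfloor$ whenever $G(D)$ is acyclic.

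For the general case the obstruction is precisely the oriented cycles of $G$: no consistent height order exists, so some arcs must be made non-monotone. The idea is to pick a feedback arc set $F\subseteq E(G)$, topologically sort $G-F$ to obtain heights $h_i$ as above, realize each $e_i$ at the constant height $h_i$, and then repair each back-arc $u_i\to u_j\in F$ (where $e_i$ must lie over $e_j$ but $h_i<h_j$) by pushing a ``hump'' of $e_i$ up over $e_j$, which introduces one local maximum. This checks out on the trefoil: $G$ is an oriented $3$-cycle, $|F|=1$, the height sequence contributes one maximum and the single hump one more, giving the correct value $b=2<3$. Note also that condition (2) forbids $2$-cycles, so every nontrivial strongly connected component of $G$ has at least $3$ vertices, which is the sort of arithmetic slack one hopes will make the count close.

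The main obstacle is the bookkeeping in this last step. A single arc $e_i$ may be the tail of several back-arcs whose heads are interleaved along $e_i$ with non-back crossings, so one hump need not suffice, and a priori the humps could contribute up to $n$ extra maxima rather than being absorbed into the $\lfloor n/2\rfloor$ budget. Making the construction work therefore requires choosing $F$, the heights $h_i$, and the hump placements simultaneously — essentially one wants each strongly connected component of $G$ on $m$ vertices to cost at most $m-1$ local maxima, with the condensation DAG contributing the rest — and then verifying that the total stays $\le n$. This coordinated optimization, rather than any single geometric move, is where I expect the real difficulty to lie; the acyclic bound $b(K)\le\lfloor e(K)/2\rfloor$ and the trefoil computation both suggest that $b(K)\le e(K)$ holds with room to spare.
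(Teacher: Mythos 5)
First, a point of reference: the statement you are proving is presented in the paper as a \emph{conjecture}. The paper itself offers only a ``proof'' in quotation marks, namely a reduction: place $D$ on a sphere $S^2$ splitting $S^3$ into balls $B_\pm$, push small neighbourhoods of the vertices into $B_-$ and the edges into $B_+$, and observe that $b(K)\le n$ would follow if the $n$-string tangle $(B_+,K\cap B_+)$ could be shown to be trivial; that last step is left open. So there is no complete argument in the paper to compare against, and your proposal has to stand on its own.

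On its own terms it has a genuine gap, and it is exactly the one you flag yourself: the hump-counting step in the non-acyclic case is the entire content of the statement, and it is not carried out. Concretely, fix a feedback arc set $F$ and heights $h_1,\dots,h_n$ from a topological sort of $G-F$. An arc $e_i$ that is the tail of a back-arc $u_i\to u_j$ must be raised above $h_j$ near each crossing of $e_i$ with $e_j$, yet must remain below $h_k$ at every crossing where $e_i$ passes under some $e_k$ with $h_k>h_j$; if crossings of these two kinds alternate along $e_i$, the arc is forced to oscillate, contributing one local maximum per oscillation rather than one per back-arc, and the number of crossings on a single edge is not bounded by any function of $n$. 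No choice of $F$, of the heights, or of the hump placements is exhibited that controls this, and the assertion that a strongly connected component on $m$ vertices costs at most $m-1$ maxima is stated as a hope rather than proved. Note also that your acyclic case, while correct, is vacuous for the problem at hand: if $G(D)$ is acyclic, ordering the edges by a topological sort makes $D$ a descending diagram, so $K$ is trivial and $b(K)=1$. What remains is therefore precisely the hard case, and the proposal stops where the difficulty begins---consistent with the fact that $b(K)\le e(K)$ is still open.
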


\subsection{Geometric Approach to Conjecture \ref{bridge}}

We sketch a geometric interpretation that supports the validity of Conjecture \ref{bridge}. Let $D$ be an $n$-segment presentation of a knot $K$ on a 2-sphere $S^2$ embedded in the 3-sphere $S^3$, where $n=s(K)$. This sphere $S^2$ divides $S^3$ into two 3-balls, $B_+$ and $B_-$. 

By an appropriate isotopy, we can push the neighborhoods of the partition points into $B_-$ and pull the interiors of the segments into $B_+$. In this configuration:
\begin{itemize}
    \item $K$ intersects $S^2$ at $2n$ points.
    \item $K \cap B_+$ consists of $n$ arcs properly embedded in $B_+$.
    \item $K \cap B_-$ consists of $n$ arcs properly embedded in $B_-$.
\end{itemize}

If the resulting $n$-string tangle $(B_+, K \cap B_+)$ is trivial (i.e., each arc is boundary-parallel or unknotted), the presentation would directly yield an $n$-bridge position, thereby implying $b(K) \le n = s(K)$. It is important to note, however, that the crossing constraints inherent in the definition of $s(K)$ (specifically Condition 2 of Definition 2.1) are more restrictive than those of standard bridge positions. This suggests that $s(K)$ may strictly exceed $b(K)$ for many knots.

\subsection{Future Work}

At present, it remains unknown whether there exists a knot $K$ such that $s(K) > 3$. While we anticipate the existence of such knots, the non-triviality of the segment number remains an open question. Consequently, we pose the following problem for future research:

\begin{problem}
Identify a knot $K$ with $s(K)=4$. We suggest the $5_1$ knot as a potential candidate for this property.
\end{problem}

\bibliographystyle{amsplain}

\begin{thebibliography}{6}

\bibitem{AST} C. Adams, R. Shinjo and K. Tanaka, \textit{Complementary regions of knot and link diagrams}, Annals of Combinatorics \textbf{15} (2011), 549--563.

\bibitem{BM} Y. Belousov and A. Malyutin, \textit{Meander diagrams of knots and spatial graphs: Proofs of generalized Jablan–Radovi\'{c} conjectures}, Topology and its Applications \textbf{274} (2020), 107122.

\bibitem{BZ} G. Burde and H. Zieschang, \textit{Knots}, Walter de Gruyter, 2002.

\bibitem{EHLN} C. Even-Zohar, J. Hass, N. Linial and T. Nowik, \textit{Universal knot diagrams}, Journal of Knot Theory and Its Ramifications \textbf{28} (2019), 1950031.

\bibitem{H} G. Hotz, \textit{Arkadenfadendarstellung von Knoten und eine neue Darstellung der Knotengruppe}, Abh. Math. Sem. Univ. Hamburg \textbf{24} (1960), 132--148.

\bibitem{IT} D.P. Ilyutko and A.D. Trofimova, \textit{2-colored diagrams of knots}, Lobachevskii J Math \textbf{38} (2017), 987--997.

\bibitem{Kau} L. H. Kauffman, New invariants in the theory of knots, \textit{Amer. Math. Mon.} \textbf{95} (1988), 195--242.

\bibitem{Kaw} A. Kawauchi, \textit{Survey on Knot Theory}, Birkh\"auser Verlag, 1996.

\bibitem{Mur} K. Murasugi, Jones Polynomials and Classical Conjectures in Knot Theory, \textit{Topology} \textbf{26} (1987), 297--307.

\bibitem{R} D. Rolfsen, \textit{Knots and Links}, AMS Chelsea Publishing, 2003.

\bibitem{Thi} M. Thistlethwaite, A Spanning Tree Expansion for the Jones Polynomial, \textit{Topology} \textbf{26} (1987), 297--309.

\end{thebibliography}

\end{document}